\theoremstyle{remark}
\newtheorem{thm}{Theorem}[section]
\newtheorem{cor}[thm]{Corollary}
\newtheorem{lem}[thm]{Lemma}
\newtheorem{exm}{Example}
\newtheorem{prop}[thm]{Proposition}
\newtheorem{defn}[thm]{Definition}
\def\Inj{\operatorname{Inj}}
\def\inj{\operatorname{inj}}
\def\mod{\operatorname{mod}}
\def\Mod{\operatorname{Mod}}
\def\KG{\operatorname{KGdim}}
\def\Hom{\operatorname{Hom}}
\def\End{\operatorname{End}}
\def\uMod{\operatorname{\underline{Mod}}\nolimits}
\def\umod{\operatorname{\underline{mod}}\nolimits}
\def\mod{\operatorname{mod}}
\def\Mod{\operatorname{Mod}}
\def\Hom{\operatorname{Hom}}
\def\End{\operatorname{End}}
\def\proj{\operatorname{proj}}
\def\inj{\operatorname{inj}}
\def\Inj{\operatorname{Inj}}
\def\Ab{\operatorname{Ab}}
\def\KG{\operatorname{KG}}
\def\fun#1#2#3{#1: #2 \rightarrow #3}
\def\mabb#1{\mathbb{#1}}
\def\mac#1{\mathcal{#1}}
\def\db #1{D^b(\mod #1)}
\def\rta{\rightarrow}
\def\la{\lambda}
\begin{document}

\title{A note on generic objects and locally finite triangulated categories}

\author{Zhe Han}
\address{Zhe Han School of mathematics and statistics, Henan University, Kaifeng, Henan, China.}
\email{hanzhe0302@gmail.com}
\begin{abstract}
We show that the homotopy category of injective $A$-modules is generically trivial if and only if the derived category of all modules
 is generically trivial for an algebra $A$. Moreover we show some connections between the generic objects,
 locally finiteness and Krull-Gabriel dimension.
\end{abstract}
\maketitle
\setcounter{tocdepth}{1}

\section{Introduction}
Locally finiteness is an important finiteness condition on a triangulated category \cite{am071,au82,kr12,xz05}. There are some other approaches to
view the locally finite triangulated category. We shall explore the connections between these finiteness conditions on triangulated categories.

Recall that an object $C$ in a triangulated category $\mac T$ with arbitrary coproduct is called \emph{compact} if the functor $\Hom_{\mac T}(C,-)$ commutes
with coproducts.  The triangulated category $\mac T$ is \emph{compactly generated} if there is exists a set $T$ of compact objects of $\mac T$ such that
$\Hom(T,X)=0$ implies that $X=0$. If a triangulated category is compactly
generated, then its subcategory $\mac T^c$ of all compact objects is also a triangulated category. For example, the unbounded derived category $D(\Mod R)$ of all modules for neotherian
ring $R$ is compactly generated with $D(\Mod R)^c$ the category of all perfect complexes.

Generic objects in derived categories were studied in \cite{ba06,gk02}. Now we consider generic objects in compactly generated triangulated categories. Generic objects are an indecomposable non-compact object satisfying some
finiteness condition. The appearance of generic objects determines the properties of the triangulated category. A triangulated category is called \emph{generically trivial}
if it does not contain any generic object.

By \cite{h13}, $D(\Mod A)$ is generically trivial iff the algebra $A$ is derived equivalent to an algebra of Dynkin type.
Now we consider when the category $K(\Inj A)$ is generically trivial. A compactly generated triangulated category $\mac T$ is \emph{associated with an algebra} $A$
if it is of the form either $K(\Inj A)$ or $D(\Mod A)$ or $\uMod A$ with $A$ a self-injective algebra. The \emph{abelianisation} $Ab(\mac D)$ of a triangulated category $\mac D$ is the category of all additive functors $F:\mac D^{op}\rta \Ab$ into the category of abelian groups satisfying
 \[\Hom_{\mac D}(-,X)\rta \Hom_{\mac D}(-,Y)\rta F\rta 0,\]
 for $X$ and $Y$ in $\mac D$.
  We have the following main theorem. The result is closely related to compactly generated triangulated categories of finite type in \cite{bel00}.
\begin{thm}
  Let $\mac T$ be a compactly generated triangulated category associated with an algebra, set $\mac A=Ab (\mac T^c)$ then the following are equivalent.
  \begin{enumerate}
    \item $\mac T$ is generically trivial.
    \item $\mac T^c$ is locally finite.
    \item $\KG\dim \mac A=0$.
    \item $\mac A$ is locally noetherian.
  \end{enumerate}
\end{thm}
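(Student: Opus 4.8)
The plan is to study everything through the restricted Yoneda functor $\mathbf h\colon \mac T\rta \Mod \mac T^c$, $X\mapsto \Hom_{\mac T}(-,X)|_{\mac T^c}$, under which $\mac A=Ab(\mac T^c)$ is identified with the abelian category of finitely presented functors on $\mac T^c$. Since $\mac T^c$ is triangulated it has weak kernels, so $\mac A$ is genuinely abelian, its simple objects are indexed by the indecomposables of $\mac T^c$ (via $S_X=\Hom(-,X)/\rad(-,X)$), and every object of $\mac A$ is a cokernel of a morphism of representables. Because $\mac T$ is associated with an algebra, $\mac T^c$ is a $\Hom$-finite Krull--Schmidt triangulated $k$-category carrying a duality (of Auslander--Reiten/Serre type) onto its opposite. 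I would prove the cycle $(2)\Rightarrow(3)\Rightarrow(4)\Rightarrow(2)$ by functor-theoretic means and then treat $(1)\Leftrightarrow(2)$ through the theory of generic (endofinite) objects.

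For $(2)\Rightarrow(3)$: if $\mac T^c$ is locally finite, then for each indecomposable $X$ only finitely many indecomposables admit a nonzero map to $X$, and each $\Hom$-space is finite-dimensional, so the representable $\Hom_{\mac T^c}(-,X)$ has finite length in $\mac A$; as every finitely presented functor is a quotient of such a representable, every object of $\mac A$ has finite length, that is $\KG\dim \mac A=0$. The step $(3)\Rightarrow(4)$ is immediate, since finite length forces the noetherian condition, so $\mac A$ is locally noetherian. For $(4)\Rightarrow(2)$ I would invoke the duality on $\mac T^c$: combined with the $k$-duality $\Hom_k(-,k)$ it yields a self-duality of $\mac A$ interchanging the ascending and descending chain conditions on subobjects, so that local noetherianity upgrades to local artinianity, an object that is both noetherian and artinian has finite length, and finite length of all representables returns local finiteness of $\mac T^c$. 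This closes the equivalence $(2)\Leftrightarrow(3)\Leftrightarrow(4)$; establishing the self-duality of $\mac A$ cleanly in all three incarnations of $\mac T$ (where a Serre functor is not always literally present) is a point that will require care.

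It remains to connect these with $(1)$. Under $\mathbf h$, indecomposable endofinite (hence pure-injective) objects of $\mac T$ correspond to endofinite points of the Ziegler spectrum, equivalently to indecomposable injectives of $\Mod \mac T^c$, and the generic objects are exactly the non-compact ones among them. For $(2)\Rightarrow(1)$ I would argue that when $\mac A$ is a length category the Grothendieck category $\Mod \mac T^c$ is locally finite, every injective functor splits into indecomposable injectives of finite length, and one checks that each such injective is the image of a compact object of $\mac T^c$; hence $\mac T$ has no generic object.

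The reverse implication $(1)\Rightarrow(2)$ is where I expect the main obstacle to lie: assuming $\mac T^c$ is \emph{not} locally finite, I must manufacture a generic object, which is the triangulated analogue of the second Brauer--Thrall phenomenon. The natural route is to use the infinite families of indecomposables witnessing the failure of local finiteness to produce a point of the Ziegler spectrum in their closure, and then to show, by controlling endolength inside $\Mod \mac T^c$, that this limit point is an indecomposable object of finite endolength that is not compact. Bounding the endolength of the limit and ruling out compactness of the resulting object are the delicate steps, and this is where the bulk of the argument will be concentrated.
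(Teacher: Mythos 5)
Your proposal and the paper take fundamentally different routes: you attempt a direct, functor-theoretic proof over the Ziegler spectrum, while the paper's proof is classification-based. Unfortunately your route has a genuine gap at exactly the step you yourself flag, namely $(1)\Rightarrow(2)$: assuming $\mac T$ has no generic object, you must show $\mac T^c$ is locally finite, and your text offers only a strategy (``produce a point of the Ziegler spectrum in the closure of an infinite family of indecomposables, bound its endolength, rule out compactness'') with the admission that these are the delicate steps. No such direct ``Brauer--Thrall II for triangulated categories'' argument is carried out, and none is available off the shelf; this is precisely why the paper does not argue this way. Instead, the paper deduces $(1)\Leftrightarrow(2)$ from its Theorem \ref{thm1}: $K(\Inj A)$ is generically trivial iff $D(\Mod A)$ is, iff $A$ is derived equivalent to a Dynkin-type algebra. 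That theorem in turn rests on heavy classification input specific to the algebra setting: the result of \cite{h13} for $D(\Mod A)$, Vossieck's classification of derived discrete algebras, the fact that derived discrete algebras are Gorenstein, and Hom-finiteness of $D^b_{sg}(A)$ producing generic objects in $K(\Inj A)$ (Lemma \ref{l2}, Propositions \ref{p1}, \ref{p2}); local finiteness of $\mac T^c$ is then matched against the same Dynkin classification. The case $\mac T=\uMod A$ is handled separately via Proposition \ref{p3} (generically trivial iff $A$ has finite representation type). So where you need a new structural theorem, the paper only needs to quote known classifications.

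A second, smaller gap sits in your $(4)\Rightarrow(2)$ step. Freyd's duality gives $Ab(\mac D)^{op}\simeq Ab(\mac D^{op})$, and composing with the $k$-duality identifies, say, $Ab(\db A)^{op}$ with the abelianization built from $\db{A^{op}}$ --- the \emph{opposite} algebra --- not with $Ab(\db A)$ itself. Hence noetherianity of $\mac A$ yields artinianity of the opposite-algebra category, and to close the loop you would need left-right symmetry of the noetherian condition or a genuine self-duality of $\mac A$, which requires a Serre functor on $\mac T^c$; for $\mac T^c=\db A$ with $gl.\dim A=\infty$ (the only interesting case) no Serre functor exists. You flag this as ``a point that will require care,'' but it is not resolved, and the paper sidesteps it entirely by citing Beligiannis \cite[Theorem 12.20]{bel00} for the equivalence of $(1)$ and $(4)$. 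Your steps $(2)\Rightarrow(3)$ and $(3)\Rightarrow(4)$ are fine and essentially agree with the paper's Lemma \ref{l3}, but as it stands the proposal proves only the easy implications.
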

In section 2, we give some introductions about endofinite objects and the recollement between $K(\Inj A)$ and $D(\Mod A)$. In section 3, we give an example of generically trivial triangulated category. Let $A$ be a self-injective algebra, the stable category $\uMod A$ of all $A$-modules is a compactly
generated triangulated category. The category $\uMod A$ is generically trivial iff  $A$ is of finite type.
In section 4, we consider the generic objects in the category $K(\Inj A)$ for an algebra $A$. We show that $K(\Inj A)$ is generically trivial iff the derived category $D(\Mod A)$
 is generically trivial.
 In section 5, we consider the Krull-Gabriel dimension of $Ab(\mac D)$ and show the connections between generically trivial, locally finiteness and Krull-Gabriel dimension.

In the paper, we assume that $k$ is an algebraically closed field and $A$ is a finite dimensional $k$-algebra with a connected quiver $Q$.
 \section{Preliminary}
In this section, we introduce the endofinite  object in a compactly generated category and the relation between  $K(\Inj A)$ and $D(\Mod A)$.
\subsection{Endofinite objects}For a triangulated category $\mac T$, we mean $\mac T$ is $k$-linear, i.e $\Hom_{\mac T}(X,Y)$ is a $k$-vector space.
For a triangulated category $\mac T$  with arbitrary coproducts, an object $C$ of $\mac T$ is \emph{compact} if the functor $\Hom_{\mac T}(C,-)$ commutes with coproducts.  The triangulated category $\mac T$ is \emph{compactly generated} if there is exists a set $T$ of compact objects of $\mac T$ such that
$\Hom(T,X)=0$ implies that $X=0$. The full subcategory $\mac T^c$ of all compact objects of $\mac T$ is a triangulated category.

Let $\Mod A$ be the category of all $A$-modules and $\Inj A$ is the full additive subcategory of all injective $A$-modules. The unbounded derived category $D(\Mod A)$
and the homotopy category $K(\Inj A)$  both are compactly generated. The category $D(\Mod A)^c$ is equivalent to the homotopy category $K^b(\proj A)$ of finitely generated
projective $A$-modules. The category $K(\Inj A)^c$ is equivalent to the bounded derived category $\db A$ of the module category $\mod A$, the category of finitely
generated $A$-modules.

\begin{defn}
 Let $\mac T$ be a compactly generated triangulated category. An object $E$ in $\mac T$ is endofinite if the $\End_{\mac T} E$-module $\Hom(X,E)$ has finite length for any $X$ in $\mac T^c$.
\end{defn}

 For a compactly generated triangulated category $\mac T$, a full triangulated subcategory $\mac S$ of $\mac T$ is \emph{localizing} if $\mac S$ is closed under taking
 coproducts. The category $D(\Mod A)$ could be viewed as the localizing subcategory of $K(\Inj A)$ which is generated by the injective resolution of $A$.
The relation between  endofinite objects in a triangulated category $\mac T$ and its localizing subcategory is established by the following result.
\begin{lem}\cite[Lemma 1.3]{kra99}\label{eloc}
 Let $\mac S$ be a localizing subcategory of $\mac T$ which is generated by compact objects from $\mac T$, and $q:\mac T\rta\mac S $ be a right adjoint of the inclusion $i:\mac S\rta \mac T$.

(1) $\mac S$ is a compactly generated triangulated category.

(2) If X is an endofinite object in $\mac T$, then $q(X)$ is endofinite in $\mac S$.
\end{lem}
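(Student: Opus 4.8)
The plan is to treat the two parts in turn, with the generation statement of~(1) feeding the length estimate in~(2). Throughout, fix a set $\mac C$ of compact objects of $\mac T$ with $\mac S=\operatorname{Loc}(\mac C)$, the smallest localizing subcategory containing $\mac C$. The first observation is that because $\mac S$ is closed under coproducts in $\mac T$, the coproduct of a family of objects of $\mac S$ formed in $\mac T$ already lies in $\mac S$ and serves as their coproduct in $\mac S$; hence the inclusion $i$ preserves coproducts, and each $C\in\mac C$, being compact in $\mac T$, remains compact as an object of $\mac S$.

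For the generation part of~(1), I would run the standard orthogonality argument: assume $\mac C$ is closed under shifts and set $\mac N=\{X\in\mac S: \Hom_{\mac S}(C,X)=0\ \text{for all}\ C\in\mac C\}$. Then $\mac N$ is a localizing subcategory, and its left orthogonal ${}^{\perp}\mac N$ is localizing and contains $\mac C$, hence contains $\operatorname{Loc}(\mac C)=\mac S$; therefore any $X\in\mac N\subseteq\mac S$ satisfies $\operatorname{id}_X=0$ and vanishes. This proves $\mac S$ is compactly generated by $\mac C$. The point I really need downstream is the identification of the compact objects: by Neeman's theorem $\mac S^c=\operatorname{thick}(\mac C)$, and since $\mac C\subseteq\mac T^c$ and $\mac T^c$ is thick in $\mac T$, every compact object of $\mac S$ is compact in $\mac T$, i.e. $i(\mac S^c)\subseteq\mac T^c$.

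For~(2), let $Z\in\mac S^c$ and write $M=\Hom_{\mac S}(Z,q(X))$, which I must show has finite length over $R'=\End_{\mac S}(q(X))$. The adjunction $(i,q)$ gives a natural isomorphism $M\cong\Hom_{\mac T}(iZ,X)$; by the previous paragraph $iZ\in\mac T^c$, so the right-hand side has finite length as a module over $R=\End_{\mac T}(X)$ by endofiniteness of $X$. It remains to compare the two module structures. Functoriality of $q$ furnishes a ring homomorphism $q\colon R\rta R'$, and I would check, using naturality of the counit $\epsilon\colon iq\rta\operatorname{id}_{\mac T}$ against an endomorphism $\b\colon X\rta X$ (so that $\epsilon_X\circ iq(\b)=\b\circ\epsilon_X$), that the adjunction isomorphism is $R$-linear when $R$ acts on $M$ through $q$. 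Consequently the $R$-module structure on $M$ is the restriction of its $R'$-module structure along $q\colon R\rta R'$.

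The conclusion is then a formal restriction-of-scalars remark: every $R'$-submodule of $M$ is in particular an $R$-submodule, so any strictly ascending chain of $R'$-submodules is a strictly ascending chain of $R$-submodules; since $M$ has finite length over $R$, it has finite length over $R'$, with $\operatorname{length}_{R'}M\le\operatorname{length}_{R}M$. Hence $q(X)$ is endofinite in $\mac S$. I expect the genuine content to be concentrated in two places: correctly pinning down that $i(\mac S^c)\subseteq\mac T^c$, so that the adjunction transports the finite-length hypothesis to an object of $\mac T^c$, and verifying the $R$-linearity of the adjunction isomorphism via the counit, after which the length transfer is automatic.
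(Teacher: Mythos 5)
Your proof is correct. Note that the paper itself does not prove this lemma at all — it is quoted verbatim from Krause's cited work — and your argument is essentially the standard one given there: part (1) via the orthogonality argument plus Neeman's identification $\mac S^c=\operatorname{thick}(\mac C)$, which yields the key inclusion $i(\mac S^c)\subseteq \mac T^c$, and part (2) via the adjunction isomorphism $\Hom_{\mac S}(Z,q(X))\cong\Hom_{\mac T}(iZ,X)$ together with restriction of scalars along the ring map $\End_{\mac T}(X)\rta\End_{\mac S}(q(X))$. Your verification that the adjunction isomorphism is linear over $\End_{\mac T}(X)$ via naturality of the counit is exactly the point that makes the length transfer legitimate, so there is no gap.
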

By this lemma, we know that for each endofinite object in $K(\Inj A)$ there is a corresponding endofinite object in $D(\Mod A)$.
\subsection{Recollement}The close relation between $D(\Mod A)$ and $K(\Inj A)$ is contained in the following recollement.
 We recall the recollement and summarize how to construct it \cite{kr05}.
\[\xymatrix{K_{ac}(\Inj A)\ar[rr]|-{I}&&K(\Inj A) \ar[rr]|-{L}
\ar@<1.5ex>[ll]^-{I_{\lambda}}\ar@<-1.5ex>[ll]_-{I_{\rho}}&&D(\Mod A)
\ar@<1.5ex>[ll]^-{L_{\lambda}}\ar@<-1.5ex>[ll]_-{L_{\rho}}}.\]

Consider the canonical functors
$I:K_{ac}(\Inj A)\rta K(\Inj A)$ and $L:K(\Inj A)\xrightarrow{inc} K(\Mod A)\xrightarrow{can} D(\Mod A),$ we should show that $I$ and $L$ both have right and left adjoints.

Firstly, we show that $L$ has right adjoint $L_{\rho}$. This is equivalent to the functor $I$ has right adjoint $I_{\rho}$ \cite[Lemma 3.2]{kr05}.
 Let $  K_{\inj}(A)$ be the smallest triangulated category of $  K(\Mod A)$ closed
under taking products and contains $\Inj A$. The inclusion functor $K_{\inj}(A)\rta K(\Inj A)$ preserves products, and has a left adjoint $i:K(\Mod A)\rta K_{\Inj}(A)$ by \cite[Theorem 8.6.1]{nee01}. The functor $i$ induces an
equivalence \[D(\Mod A)\xrightarrow{\sim} K_{\inj}(A). \] By the natural isomorphism
$\Hom_{D(\Mod A)}(X,Y)\cong \Hom_{K(\Mod A)}(X,iY),$ we can take
the right adjoint $L_{\rho}$ of $L$ as the composition
$$\xymatrix{  D(\Mod A)\ar[r]^{i}&  K_{\inj}(A)\ar[r]&  K(\Inj A)}.$$

Let $\mac K$ be the localizing subcategory of $  K(\Inj A)$, generated by all compact objects $X\in   K(\Inj A)$ such that $LX$ is compact in $  D(\Mod A)$. Then we have that
$\fun {L_{\mac K}}{\mac K}{  D(\Mod A)}$ is an equivalence. Fix a left adjoint $\fun {Q}{  D(\Mod A)}{\mac K}$, the composition $L_{\la}:\xymatrix{  D(\Mod A)\ar[r]^{Q}&\mac   K\ar[r]^{inc}&  K(\Inj A)}$ is a left adjoint of $L$.

The fully faithful functor $L_{\la}:  D(\Mod A)\rta   K(\Inj A)$ identifies $  D(\Mod A)$ with the localizing subcategory of $  K(\Inj A)$ which is generated by all compact objects in the image of $L_{\la}$.
That means the functor $L_{\la}$ identifies $  D(A)$ with the localizing subcategory of $  K(\Inj A)$ which is generated by the injective resolution
$I_A$ of $A$.

 If the global dimension of $A$ is finite then $D(\Mod A)\cong K(\Inj A)$. In this case, $K(\Inj A)$ is generically trivial if and only if $D(\Mod A)$ is generically trivial.

If the global dimension of $A$ is infinite, then
we can view $D(\Mod A)$ as a localizing  subcategory of $K(\Inj A)$ by the adjoint functors. In  this case,
if $X$ is an endofinite object in $K(\Inj A)$ then the object $L(X)$ in $D(\Mod A)$ is endofinite by Lemma \ref{eloc}. Conversely, for an endofinite object $Y\in D(\Mod A)$, we do not know
whether $L_{\rho}(Y)$ or $L_{\la}(Y)$ is endofinite.
\section{Generically trivial triangulated categories}
In this section, we introduce the generic object in a compactly generated triangulated category. We reformulate that
a self-injective algebra $A$ is of finite representation type by the stable module category $\uMod A$ being generically trivial.
\begin{defn}\label{d1}
 Let $\mac T$ be a compactly generated triangulated category, an object $E\in \mac T$ is called generic if it is an indecomposable endofinite object and not compact.
 The category $\mac T$ is called generically trivial if it does
not have any generic objects.
\end{defn}
We give some examples to show the generic objects in derived categories. By the result in \cite{h13}, the category $D(\Mod A)$ is generically trivial iff  $A$ is derived hereditary
of Dynkin type.
\begin{exm}
 Let $A$ be a finite dimensional $k$-algebra. The category $ D(\Mod A)$ is a compactly generated triangulated category. If there exists a generic module $M\in \Mod A$, then $M$ viewed as a complex concentrated in one degree, which is a
generic object in $  D(\Mod A)$.
\end{exm}
For a self-injective algebra $A$, the stable category $\uMod A$ is a compactly generated with $\umod A$ the subcategory of compact objects. An $A$-module $M$ is
\emph{endofinite} if it is finite length as $\End_AM$-module \cite{cb92}.
The following result characterizes the endofinite object in the stable module category of a self-injective algebra.
\begin{lem}\cite[Proposition 2.1]{bk00}\label{bk}
Let $A$ be a finite dimensional self injective algebra. The following conditions are equivalent.
 \begin{enumerate}
  \item[(1)] $M $ is an endofinite module in $\Mod A$.
\item[(2)] $M $ is an endofinite object in $\uMod A$,
\item[(3)] $\underline{\Hom}_A(S,X)$ is finite length over $\underline{\End}_AX$ for every simple $A$-module $S$.
 \end{enumerate}
\end{lem}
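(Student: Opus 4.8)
The plan is to prove the equivalence of the three module-theoretic conditions by comparing length over the endomorphism ring in the stable category with length over the endomorphism ring in the module category, exploiting the self-injectivity of $A$. First I would recall the structure of $\uMod A$: since $A$ is self-injective, $\uMod A$ is triangulated, and for $M\in\Mod A$ one has $\underline{\End}_A M = \End_A M / P(M,M)$, where $P(M,M)$ denotes the two-sided ideal of endomorphisms factoring through a projective-injective module. The key point is that the difference between $\End_A M$ and $\underline{\End}_A M$, and between $\Hom_A(X,M)$ and $\underline{\Hom}_A(X,M)$, is always controlled by maps factoring through projectives, and these projective contributions are finite-dimensional when $X$ is finitely generated. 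I would make this precise by observing that for $X\in\mod A$ the projective cover is finitely generated, so the space of maps $X\to M$ factoring through a projective is a homomorphic image of $\Hom_A(P,M)$ with $P$ a fixed finitely generated projective covering $X$, hence is finite length over $\End_A M$ whenever $M$ is endofinite.

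For the implication $(1)\Rightarrow(2)$ I would argue that if $M$ has finite length over $\End_A M$, then for any compact $X\in\umod A$ the space $\underline{\Hom}_A(X,M)$ is a subquotient (via the exact sequence relating $\Hom$ and $\underline{\Hom}$) of $\Hom_A(X,M)$, and since the latter is finite length over $\End_A M$ and the $\underline{\End}_A M$-action factors through $\End_A M$, finiteness of length is inherited. For the converse $(2)\Rightarrow(1)$ I would use that $\Hom_A(X,M)$ sits in an exact sequence whose outer terms are $\underline{\Hom}_A(X,M)$ and the finite-dimensional space of maps factoring through projectives; controlling the module structure over the two endomorphism rings — which differ by the ideal $P(M,M)$ acting nilpotently or with finite-length cokernel — gives finite length over $\End_A M$. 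The equivalence $(2)\Leftrightarrow(3)$ I would obtain by a devissage/induction on the radical length of the compact object $X$: every $X\in\umod A$ has a finite filtration with simple subquotients, and $\underline{\Hom}_A(-,M)$ is (co)homological, so finite length over $\underline{\End}_A M$ for all simples propagates to all finitely generated $X$ by the additivity of length in exact sequences, while the reverse is immediate since simples are themselves compact.

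The main obstacle I anticipate is the bookkeeping in $(2)\Leftrightarrow(1)$: one must carefully track how a module that has finite length as a module over $\underline{\End}_A M$ relates to finite length over the larger ring $\End_A M$, since the ideal $P(M,M)\subseteq\End_A M$ of maps factoring through projectives is generally nonzero, and a priori an $\End_A M$-module that is finite length modulo $P(M,M)$ need not be finite length over $\End_A M$. Resolving this requires showing that $P(M,M)$ acts on the relevant $\Hom$-spaces through a finite-length piece — which is exactly where self-injectivity (equating projectives and injectives) and the finite generation of compact objects enter, guaranteeing that the projective-factoring contribution to each $\Hom_A(X,M)$ is finite-dimensional over $k$ and hence harmless. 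Once this finiteness of the projective part is established, all three conditions align, and the cited result of \cite{bk00} follows.
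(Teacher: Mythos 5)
The paper itself gives no proof of this lemma (it is quoted from \cite{bk00}), so your proposal has to be judged on its own merits. Two of your three steps are sound: $(1)\Rightarrow(2)$, using that $\underline{\Hom}_A(X,M)$ is a quotient of $\Hom_A(X,M)$ and that its $\End_A M$-action factors through the surjection $\End_A M\rightarrow\underline{\End}_A M$, so finite length passes down; and $(2)\Leftrightarrow(3)$ by devissage along a composition series of the compact object, since $\underline{\Hom}_A(-,M)$ is cohomological and every object of $\umod A$ (including all syzygies and cosyzygies) is again a finitely generated module. The gap is in $(2)\Rightarrow(1)$, exactly at the point you yourself flag as the main obstacle, and your resolution rests on a false claim: that for finitely generated $X$ the space $P(X,M)$ of maps $X\to M$ factoring through a projective is finite-dimensional over $k$. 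Take $X=A$: every map out of $A$ factors through a projective, so $P(A,M)=\Hom_A(A,M)\cong M$, which is infinite-dimensional whenever $M$ is. Or take $X=S$ simple and $M$ a countable direct sum of copies of the injective envelope $I(S)$ (which is projective, $A$ being self-injective): every map $S\to M$ lands in a finite sub-sum, hence factors through a projective, so $P(S,M)=\Hom_A(S,M)$ is infinite-dimensional. Your fallback remark that $P(M,M)$ acts ``nilpotently or with finite-length cokernel'' is likewise unsupported. As written, $(2)\Rightarrow(1)$ is in fact circular: $P(S,M)$ is a quotient of $\Hom_A(I(S),M)$, so to control it you would need to know that $\Hom_A(Q,M)$ has finite length over $\End_A M$ for $Q$ projective, which is essentially the endofiniteness of $M$ --- the very thing being proved.

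What closes the gap is a splitting, not a bound. Since $A$ is self-injective and noetherian, a Zorn's lemma argument (unions of chains of injective submodules are injective) yields $M=N\oplus M'$ with $N$ injective (equivalently projective) and $M'$ containing no nonzero injective submodule. Then: (a) a nonzero map $S\to M'$ factoring through a projective would extend, because $S$ is essential in $I(S)$ and $I(S)$ is projective-injective, to an embedding $I(S)\hookrightarrow M'$, which is impossible; hence $P(S,M')=0$, so $\Hom_A(S,M')\cong\underline{\Hom}_A(S,M')\cong\underline{\Hom}_A(S,M)$ with compatible ring actions, and hypothesis (3) together with your devissage makes $M'$ endofinite in $\Mod A$. (b) $N$ is a direct sum of copies of the finitely many indecomposable injectives, and such modules are endofinite by the standard facts of \cite{cb92} (endofiniteness is insensitive to the multiplicities occurring in direct sums of copies of finitely many finite-dimensional modules). (c) A finite direct sum of endofinite modules is endofinite, so $M=N\oplus M'$ is endofinite. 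This splitting-off of a maximal projective-injective submodule is the missing idea; without it, the projective contribution to $\Hom_A(X,M)$ cannot be tamed.
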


\begin{prop}\label{p3}
 Let $A$ be a finite dimensional self-injective algebra. Then $\uMod A$ is generically trivial if and only if $A$ is of finite representation  type.
\end{prop}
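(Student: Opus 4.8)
The plan is to translate the statement into Crawley--Boevey's characterization of representation-finite algebras through generic modules, using Lemma \ref{bk} as the bridge between $\uMod A$ and $\Mod A$. First I would pin down what a generic object of $\uMod A$ is in module-theoretic terms. Since $A$ is finite-dimensional and self-injective, the projective and injective modules coincide, and the compact objects of $\uMod A$ are precisely the images of the finite-dimensional modules, i.e. $\umod A$. Thus an indecomposable object of $\uMod A$ is non-compact exactly when, after deleting projective summands, its underlying module is infinite-dimensional over $k$. Combined with Lemma \ref{bk}, which says that endofiniteness in $\uMod A$ is the same as endofiniteness of the underlying module in $\Mod A$, a generic object of $\uMod A$ is the same thing as an indecomposable, infinite-dimensional, endofinite $A$-module that is not projective.

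The next step is to check that this passage between the stable category and the module category preserves indecomposability, so that generic objects of $\uMod A$ correspond bijectively to the generic modules of $A$ in the sense of \cite{cb92} (indecomposable endofinite modules of infinite $k$-dimension). For a non-projective module $M$ with local endomorphism ring the stable endomorphism ring $\underline{\End}_A M$ is a nonzero quotient of $\End_A M$, hence again local, so $M$ stays indecomposable in $\uMod A$. Conversely, every endofinite object of $\uMod A$ is represented by an endofinite $A$-module, and since the endomorphism ring of an endofinite module is semiperfect it decomposes into finitely many indecomposables with local endomorphism rings; discarding the projective summands matches indecomposable stable objects with non-projective indecomposable modules. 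Infinite-dimensionality, and hence non-compactness, is preserved throughout.

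Finally I would invoke the theorem of \cite{cb92}: a finite-dimensional algebra is of finite representation type if and only if it has no generic module, equivalently every indecomposable endofinite module is finite-dimensional. Together with the dictionary above this yields that $\uMod A$ is generically trivial iff $A$ has no generic module iff $A$ is of finite representation type. The main obstacle is the indecomposability bookkeeping of the middle step: one must be sure that reducing modulo projectives neither splits an indecomposable endofinite module nor turns an infinite-dimensional module into a compact one. This is exactly where the self-injectivity of $A$ (so that $\Mod A$ is Frobenius and $\uMod A$ is triangulated with $\umod A$ as its compact objects) and the semiperfectness of endomorphism rings of endofinite modules are used.
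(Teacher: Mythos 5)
Your proposal is correct and is essentially the paper's own argument: both directions reduce to Lemma \ref{bk} together with Crawley--Boevey's generic module theory (representation-infinite over an algebraically closed field implies a generic module exists; representation-finite implies every module is a direct sum of finitely generated indecomposables, hence no generic module), with the paper simply compressing the explicit dictionary you set up between generic objects of $\uMod A$ and generic $A$-modules. One factual slip in your bookkeeping: the endomorphism ring of an endofinite module need not be semiperfect; for instance $S^{(\mathbb{N})}$ with $S$ simple is endofinite, but its endomorphism ring is a ring of column-finite matrices over $k$, which is not semiperfect. The decomposition you actually need, namely that every endofinite module is a direct sum of indecomposable endofinite modules with local endomorphism rings (with only finitely many isomorphism classes of summands occurring), is a theorem of Crawley--Boevey in \cite{cb92}, so your argument goes through once you cite that structure theorem instead of appealing to semiperfectness.
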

\begin{proof}
 If $A$ is a self-injective algebra of finite type then every module is a direct sum of indecomposable finitely generated $A$-modules.
  Thus every $A$-modules is endofinite and each indecomposable endofinite module
is finitely generated. Thus each endofinite object in $\uMod A$ lies in $\umod A$. It implies that $\uMod A$ is generically trivial.

Conversely, we assume that $\uMod A$ is generically trivial. If $A$ is representation infinite, then there exist a generic $A$-module $G$. By Lemma
\ref{bk}, we have that $G$ is an generic object in $\uMod A$. This leads to a contradiction.
\end{proof}
\section{Generic objects in $K(\Inj A)$}
In this section, we consider the generic object in $K(\Inj A)$ and give a criterion for $K(\Inj A)$ being generically trivial.

Recall that the singularity category $D^b_{sg}(\mod A)$ of an algebra $A$ is the Verdier quotient of $D^b(\mod A)$ by $K^b(\proj A)$
\[D^b_{sg}(\mod A)=\db A/K^b(\proj A).\]
By \cite[Corollary 5.4]{kr05}, we have an equivalence up to direct factors $\Gamma:D^b_{sg}(\mod A)\rta K_{ac}^c(\Inj A)$, i.e $\Gamma$ is fully faithful and every object
in $K_{ac}(\Inj A)^c$ is a direct factor of some objects in the image of $\Gamma$. The triangulated category $\mac D$ is \emph{Hom-finite} if the space $\Hom_{\mac D}(X,Y)$ is finite dimensional over $k$ for each pair $X,Y$ of objects in $\mac D$.
 The category $D^b_{sg}(A)$ vanishes if and only if the global dimension of $A$ is finite, denoted by $gl.\dim A<\infty$. However, $D^b_{sg}(A)$
is not Hom-finite in general. In \cite{cxw11}, there is a criterion for the hom-finiteness of $D^b_{sg}(\mod A)$ of a radical square zero Artin algebra $A$.
  We know that $D^b_{sg}(A)$ is Hom-finite for a Gorenstein algebra $A$, i.e the injective dimension of $A$ as $A$-module is finite.

Recall that an algebra $A$ is \emph{derived discrete} if and only if $A$ is derived equivalent to an algebra of Dynkin type or a gentle algebra with one cycle in its quiver
not satisfying the clock condition \cite{v01}. Each derived discrete algebra is a Gorenstein algebra \cite{gr05}.
It follows that $K(\Inj A)$ contains generic objects if $A$ is derived discrete and not derived hereditary of Dynkin type by the following lemma.

\begin{lem}\label{l2}
 Assume that the global dimension of $A$ is infinity. If the singularity category $D^b_{sg}(A)$ of $A$ is Hom-finite, then there exists generic object in $K(\Inj A)$.
\end{lem}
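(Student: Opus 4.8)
The plan is to produce the generic object inside the acyclic part $K_{ac}(\Inj A)$ of the recollement and then transport it into $K(\Inj A)$ along the inclusion $I$, exploiting the fact that compactness is measured differently in the two categories. Since the global dimension of $A$ is infinite, the singularity category $D^b_{sg}(A)$ is nonzero, so by the equivalence up to direct factors $\Gamma\colon D^b_{sg}(A)\rta K_{ac}(\Inj A)^c$ the category $K_{ac}(\Inj A)^c$ is nonzero; being the idempotent completion of the Hom-finite category $D^b_{sg}(A)$, it is again Hom-finite and hence Krull--Schmidt. I would fix a nonzero indecomposable object $S\in K_{ac}(\Inj A)^c$, so that $\End_{K_{ac}(\Inj A)}(S)$ is local, and set $E:=I(S)\in K(\Inj A)$.

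First I would check that $E$ is indecomposable and non-compact. Indecomposability is immediate from $I$ being fully faithful, so that $\End_{K(\Inj A)}(E)\cong\End_{K_{ac}(\Inj A)}(S)$ remains local. For non-compactness, note that $E$ is an acyclic complex of injectives, whence $L(E)=0$ in $D(\Mod A)$. On the other hand every compact object of $K(\Inj A)$ is isomorphic to the injective resolution $iM$ of some $M\in D^b(\mod A)$, and $L(iM)\cong M$. Thus if $E$ were compact, say $E\cong iM$, then $M\cong L(E)=0$ and therefore $E\cong iM\cong 0$, contradicting $S\neq 0$. This relativity of compactness --- $S$ is compact in $K_{ac}(\Inj A)$ but $E=I(S)$ is \emph{not} compact in the larger category $K(\Inj A)$ --- is the conceptual heart of the construction, and it is exactly what infinite global dimension buys us (through $D^b_{sg}(A)\neq 0$).

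It then remains to prove that $E$ is endofinite, and this is where the Hom-finiteness of $D^b_{sg}(A)$ enters. For any compact $C\in K(\Inj A)^c=D^b(\mod A)$, the adjunction $I_\lambda\dashv I$ yields $\Hom_{K(\Inj A)}(C,E)\cong\Hom_{K_{ac}(\Inj A)}(I_\lambda C,S)$ as modules over $\End_{K(\Inj A)}(E)\cong\End_{K_{ac}(\Inj A)}(S)$. Because $K_{ac}(\Inj A)$ is closed under coproducts in $K(\Inj A)$ (for $A$ Noetherian arbitrary coproducts of injectives are injective, and coproducts preserve acyclicity), the functor $I$ preserves coproducts, so its left adjoint $I_\lambda$ sends compact objects to compact objects; hence $I_\lambda C\in K_{ac}(\Inj A)^c$. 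Both $I_\lambda C$ and $S$ are then compact in the Hom-finite category $K_{ac}(\Inj A)^c$, so $\Hom_{K_{ac}(\Inj A)}(I_\lambda C,S)$ is finite dimensional over $k$ and \emph{a fortiori} of finite length over $\End_{K_{ac}(\Inj A)}(S)$. Therefore $E$ is endofinite, and being indecomposable and non-compact it is the desired generic object. The step I expect to require the most care is pinning down that $I_\lambda$ preserves compactness together with the description of $K(\Inj A)^c$ as injective resolutions of $D^b(\mod A)$, since these two facts are precisely what convert the hypotheses --- infinite global dimension and Hom-finiteness of $D^b_{sg}(A)$ --- into non-compactness and endofiniteness, respectively.
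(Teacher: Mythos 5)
Your proof is correct and follows essentially the same route as the paper's: pick an indecomposable object of $K_{ac}(\Inj A)^c$ (nonzero since $gl.\dim A=\infty$), observe it is non-compact in $K(\Inj A)$, and deduce endofiniteness from the adjunction isomorphism $\Hom_{K(\Inj A)}(C,I S)\cong\Hom_{K_{ac}(\Inj A)}(I_{\lambda}C,S)$ together with compactness-preservation of $I_{\lambda}$ and Hom-finiteness of $K_{ac}(\Inj A)^c$. In fact you supply details the paper leaves implicit --- the Krull--Schmidt argument for existence of the indecomposable, and the verification via $L$ that nonzero acyclic complexes cannot be compact in $K(\Inj A)$ --- so your write-up is, if anything, more complete.
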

\begin{proof}
 By the assumption, the category $K_{ac}(\Inj A)^c$ is Hom-finite. Given an object $X\in K_{ac}(\Inj A)$, we have that
$$
 \Hom_{K(\Inj A)}(C,X)\cong \Hom_{K_{ac}(\Inj A)}(I_{\la}C,X),\quad \text{for}\quad C\in K(\Inj A)^c. \eqno (\dagger)
$$

 The left adjoint functor $I_{\la}$ preserves compact objects. It follows that $I_{\la}(C)\in K_{ac}(\Inj A)^c$.
  We choose an indecomposable object $Z\in K_{ac}(\Inj A)^c$ which is not compact in $K(\Inj A)$.
In order to show that $Z$ is a generic object in $K(\Inj A)$, we only need to check that $Z$ is an endofinite object in $K(\Inj A)$.
It follows from that the isomorphism $(\dagger)$ and $K_{ac}(\Inj A)^c$ is Hom-finite.

\end{proof}
\begin{lem}\label{l1}
 If $A$ is of infinite representation  type, and $M$ is a generic $A$-module, then the minimal injective resolution $I_M$ is a generic object in $K(\Inj A)$.
\end{lem}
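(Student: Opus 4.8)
The plan is to verify directly that $I_M$ satisfies the three defining properties of a generic object from Definition \ref{d1}: indecomposability, endofiniteness, and non-compactness in $K(\Inj A)$. The common engine is the recollement of Section 2. Since the minimal injective resolution of $M$ represents the image of $M$ under the right adjoint $L_{\rho}=i$ of $\fun{L}{K(\Inj A)}{D(\Mod A)}$, we have $I_M\cong L_{\rho}M$ in $K(\Inj A)$. The adjunction $(L,L_{\rho})$ then yields, for every $C\in K(\Inj A)$, a natural isomorphism
\[
\Hom_{K(\Inj A)}(C,I_M)\cong \Hom_{D(\Mod A)}(LC,M),
\]
and in particular, as $L_{\rho}$ is fully faithful, $\End_{K(\Inj A)}(I_M)\cong \End_{D(\Mod A)}(M)=\End_A M$, compatibly with the module actions. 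All three properties will be read off from this identification.

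First I would dispose of the two easier properties. As $M$ is a generic $A$-module it is indecomposable endofinite, so $\End_A M$ is local; by the displayed isomorphism $\End_{K(\Inj A)}(I_M)\cong \End_A M$ is local as well, whence $I_M$ is indecomposable. For non-compactness, recall that $L$ induces the equivalence $K(\Inj A)^c\xrightarrow{\sim}\db A$ of Section 2. If $I_M$ were compact, then $M\cong L(I_M)$ would lie in the essential image $\db A$, forcing $M$ to be isomorphic in $D(\Mod A)$ to a bounded complex of finitely generated modules; as $M$ is concentrated in degree zero this would make $M$ finitely generated, contradicting that a generic module has infinite length over $A$.

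The substance is the endofiniteness of $I_M$, which by the displayed isomorphism amounts to showing that $\Hom_{D(\Mod A)}(X,M)$ has finite length over $\End_A M$ for every $X=LC$ with $C\in K(\Inj A)^c$, i.e. for every $X\in\db A$. I would first treat a single finitely generated module $N$: choosing a projective resolution of $N$ by finitely generated projectives, each $\Hom_A(P_i,M)$ is a finite direct sum of copies of the $\End_A M$-module $M$, hence of finite length because $M$ is endofinite; therefore every $\Ext^j_A(N,M)\cong\Hom_{D(\Mod A)}(N,M[j])$, being a subquotient of such a module, is of finite length over $\End_A M$. Then I would run a dévissage on the bounded complex $X$, using the triangles from its canonical truncation together with the long exact sequences obtained by applying $\Hom_{D(\Mod A)}(-,M)$, to express $\Hom_{D(\Mod A)}(X,M)$ as an iterated extension of finitely many of the finite-length groups $\Ext^j_A(H^iX,M)$ attached to the finitely generated cohomologies $H^iX$, hence itself of finite length over $\End_A M$. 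The main obstacle is precisely this step: one must keep the $\End_A M=\End_{K(\Inj A)}(I_M)$-module structure aligned through the dévissage and verify that the finite-length property is inherited along the long exact sequences, which is where the endofiniteness of $M$ is genuinely used. Granting this, $I_M$ is an indecomposable, non-compact, endofinite object of $K(\Inj A)$, that is, a generic object.
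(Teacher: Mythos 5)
Your proof is correct and follows essentially the same route as the paper's: both pass through the identification $\Hom_{K(\Inj A)}(-,I_M)\cong\Hom_{D(\Mod A)}(L(-),M)$ coming from the recollement and deduce finite length over $\End_{K(\Inj A)}(I_M)\cong\End_A M$ via a projective resolution argument that uses the endofiniteness of $M$. Yours is in fact the more complete version, since the paper checks endofiniteness only against (injective resolutions of) simple modules, leaving the d\'evissage to arbitrary compact objects implicit, and omits the indecomposability and non-compactness verifications that you carry out explicitly.
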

\begin{proof}
 We need to show that $\Hom(C,I_M)$ is finite length over $\End_{K(\Inj A)}(I_M)$ for all compact objects $C$ in $K(\Inj A)$,
  where $I_M$ is a injective resolution of an $A$-module $M$. It suffices to consider
the isomorphism
\[\Hom_{K(\Inj A)}(I_S,I_M)\cong \Hom_{D(A)}(I_S,M)\cong\Hom_{D(A)}(S,M)\cong \Hom_A(S,M),\]
for any simple $A$-module $S$.

For a projective resolution of $S$
\[\xymatrix{\ldots\ar[r]&P_1\ar[r]&P_0\ar[r]&S\ar[r]&0\ldots},\]
we obtain the following exact sequence by applying the functor $\Hom_A(-,M)$ to the resolution of $S$,
\[0\rta \Hom_A(S,M)\rta\Hom_A(P_0,M)\rta\Hom_A(P_1,M)\rta\ldots.\]
 Since $\Hom_{D(\Mod A)}(P_0,M)$ is finite length over $\End_{D(\Mod A)}M\cong \End_{K(\Inj A)}(I_M)$,  thus $\Hom_A(S,M)\cong\Hom_{K(\Inj A)}(I_S,I_M)$
 is finite length as $\End_{K(\Inj A)}(I_M)$-module.

\end{proof}
A finite dimensional $k$-algebra $A$ is \emph{derived endo-discrete} if for each sequence $\{h_i\}_{i\in\mabb Z}$ of non-negative integers there are only finitely many indecomposable object $X$ in $\db A$ with
 \[length_{\End_{D^b(A)}(X)}(H^i(X))=h_i\]
 for all $i\in\mabb Z$. For the field $k$ with infinite cardinality, $A$ is derived discrete if and only if $A$ is derived endo-discrete \cite[Theorem 1.1]{ba06}. In this case, there exists a generic object in $D^b(\Mod A)$ if and only if $A$ is not derived discrete.

The definition of generic object in \cite{ba06} is not as same as the Definition \ref{d1}. The main different is that the triangulated category $D^b(\Mod A)$ is not compactly generated. If there is a generic object $X$ in $D^b(\Mod A)$, then $X$ is a generic object in $D(\Mod A)$ which is compactly generated. Indeed, the object $X$ is an endofinite object in $D(\Mod A)$ by \cite[Lemma 7.1]{gk02}. On the other hand, $X$ is indecomposable and not in $\db A$. Thus $X$ is an generic object in $D(\Mod A)$. We have the following result for the generic object in $K(\Inj A)$.
\begin{prop}\label{p1}
  If $K(\Inj A)$ is generically trivial, then the algebra $A$ is derived discrete.
\end{prop}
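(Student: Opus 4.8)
The plan is to establish the contrapositive: assuming $A$ is \emph{not} derived discrete, I will produce a generic object in $K(\Inj A)$, so that $K(\Inj A)$ fails to be generically trivial. Since $k$ is algebraically closed, and hence of infinite cardinality, \cite[Theorem 1.1]{ba06} applies: $A$ is not derived endo-discrete, and consequently there exists a generic object $X$ in $D^b(\Mod A)$, which by the discussion preceding the statement is simultaneously a generic object of the compactly generated category $D(\Mod A)$. The whole point is then to descend from this derived generic object to a genuine generic \emph{module}, so that Lemma \ref{l1} becomes applicable.

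The core computation runs as follows. Because $X$ is generic it does not lie in $\db A$, so at least one cohomology module $H^j(X)$ is infinite dimensional over $k$. Endofiniteness of $X$ forces each $H^j(X)$ to have finite length over $\End_{D(\Mod A)}(X)$; restricting scalars along the natural map $\End_{D(\Mod A)}(X)\rta \End_A(H^j(X))$ then shows that $H^j(X)$ is an endofinite $A$-module. An infinite dimensional endofinite module has an infinite dimensional indecomposable endofinite summand, that is, a generic module $M$ in the sense of \cite{cb92}, so in particular $A$ is of infinite representation type. Feeding $M$ into Lemma \ref{l1} yields that the minimal injective resolution $I_M$ is a generic object of $K(\Inj A)$, completing the contrapositive. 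Equivalently, this argument shows that every representation-finite algebra is derived discrete, so the only non-discrete algebras requiring treatment are the representation-infinite ones, for which a generic module is automatically available.

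The step I expect to be the main obstacle is precisely this passage from the derived category back to modules. The naive alternative, transporting $X$ into $K(\Inj A)$ along the adjoints $L_{\la}$ or $L_{\rho}$ of the recollement, is exactly the operation whose compatibility with endofiniteness was left open in Section 2, so the proof must avoid it and route through Lemma \ref{l1} instead. The delicate bookkeeping lies in verifying that an infinite dimensional cohomology of an endofinite object of $D^b(\Mod A)$ genuinely supplies a generic $A$-module, which is where Crawley-Boevey's characterization of representation-finiteness through the absence of generic modules does the real work. I would not use Lemma \ref{l2} here: it governs the complementary situation of infinite global dimension with Hom-finite singularity category, as for the derived discrete non-Dynkin algebras, and thus produces generic objects on the other side of the dichotomy rather than for the non-discrete algebras at hand.
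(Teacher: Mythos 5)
Your opening paragraph (passing to the contrapositive, invoking \cite[Theorem 1.1]{ba06} over the infinite field $k$, and viewing the resulting generic object $X$ of $D^b(\Mod A)$ as a generic object of $D(\Mod A)$) coincides with the paper's proof. But your ``core computation'' --- descending from the complex $X$ to a generic \emph{module} --- has a genuine gap: the claim that an infinite dimensional endofinite module has an infinite dimensional indecomposable summand is false. By Crawley-Boevey's theory, an endofinite module decomposes as $\bigoplus_{i=1}^{r}M_i^{(\kappa_i)}$ with only finitely many isomorphism classes of indecomposable (endofinite) summands $M_i$, but the multiplicities $\kappa_i$ may be infinite; for instance $S^{(\mabb N)}$, for $S$ a simple module, is infinite dimensional and endofinite, yet every indecomposable summand is finite dimensional. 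So the infinite dimensionality of $H^j(X)$ does not produce a generic $A$-module.

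This is not a repairable technicality, because the statement you note your argument proves ``equivalently'' --- every representation-finite algebra is derived discrete --- is false, so any argument implying it must break. Take $A=k[x]/(x^3)$: it is representation finite, hence by Ringel--Tachikawa every $A$-module is a direct sum of finite dimensional indecomposables and $A$ has no generic modules at all; yet $A$ is not derived discrete, since it is not piecewise hereditary of Dynkin type (its global dimension is infinite) and it is not derived equivalent to a gentle one-cycle algebra (gentleness is a derived invariant and the relation $x^3$ is not quadratic). For this $A$ your chain ``not derived discrete $\Rightarrow$ generic complex $\Rightarrow$ generic module'' breaks exactly at the second arrow, while generic objects of $K(\Inj A)$ do exist --- they arise via Lemma~\ref{l2} from the Hom-finite singularity category and are honest complexes, not resolutions of modules. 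The paper's proof accordingly never descends to modules: it applies the computation of Lemma~\ref{l1} directly to the complex $X$, taking a bounded below injective resolution $I_X$ and using the isomorphisms $\Hom_{K(\Inj A)}(I_S,I_X)\cong\Hom_{D(\Mod A)}(S,X)$ for simple modules $S$, whose injective resolutions generate $K(\Inj A)^c\simeq \db A$ as a thick subcategory, together with $\End_{K(\Inj A)}(I_X)\cong\End_{D(\Mod A)}(X)$ and the endofiniteness of $X$ in $D(\Mod A)$ furnished by \cite[Lemma 7.1]{gk02}; indecomposability and non-compactness of $I_X$ follow from the same isomorphisms. Replacing your module-theoretic reduction by this direct resolution of the complex itself is what the paper means by ``the same argument as Lemma~\ref{l1},'' and it is the step your proposal is missing.
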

\begin{proof}
Assume that $A$ is not derived discrete, then $A$ is not derived endo-discrete. Thus there exists a generic object $M$ in $D^b(\Mod A)$. The object $M$ is indecomposable and not in $\db A$. Moreover, $M$ is an endofinite object in $D(\Mod A)$.
 With the same argument in Lemma \ref{l1},the minimal injective resolution $I_M$ of $M$ is a generic object in $K(\Inj A)$ .
\end{proof}

\begin{prop}\label{p2}Assume $A=kQ/I$ is an algebra with its quiver $Q$ connected.
 \begin{enumerate}
  \item[(1)] If $gl.\dim A<\infty$, then $K(\Inj A)$ is generically trivial if and only if $A$ is derived hereditary of Dynkin type.
\item[(2)] Assume that $A$ is Gorenstein with $gl.\dim A=\infty$, then $K(\Inj A)$ has generic objects.
 \end{enumerate}
\end{prop}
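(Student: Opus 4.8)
The plan is to handle the two parts separately, in each case reducing to results already in hand rather than constructing generic objects by bare hands.

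For part (1), the strategy is to transport the question into the derived category. Since $gl.\dim A<\infty$, the composite $L$ of the recollement becomes an equivalence $K(\Inj A)\xrightarrow{\sim} D(\Mod A)$, as recorded just after the construction of the recollement. An equivalence of compactly generated triangulated categories carries compact objects to compact objects and, being additive and exact, preserves indecomposability as well as endofiniteness; hence it matches generic objects with generic objects. Therefore $K(\Inj A)$ is generically trivial if and only if $D(\Mod A)$ is. It then remains only to invoke \cite{h13}, which states precisely that $D(\Mod A)$ is generically trivial if and only if $A$ is derived hereditary of Dynkin type. Composing the two equivalences yields the claim.

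For part (2), the strategy is to feed the Gorenstein hypothesis into Lemma \ref{l2}. First I would note that, because $A$ is Gorenstein, the singularity category $D^b_{sg}(A)$ is Hom-finite, as recalled at the opening of this section. Since we are assuming $gl.\dim A=\infty$, the hypotheses of Lemma \ref{l2} are met, and that lemma produces an indecomposable endofinite object of $K(\Inj A)$ that is not compact, i.e. a generic object. Hence $K(\Inj A)$ is not generically trivial.

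Neither part requires new calculation: the substance is already packaged in the equivalence $K(\Inj A)\simeq D(\Mod A)$ for finite global dimension, in the classification of \cite{h13}, and in Lemma \ref{l2}. The one point deserving care is the verification in part (1) that the triangle equivalence $L$ genuinely preserves all three defining features of a generic object. Endofiniteness is defined relative to the compact subcategory, so one must check that the induced equivalence $\mac T^c\simeq D(\Mod A)^c$ makes the relevant length conditions on $\Hom(X,E)$ correspond; this is routine but should be stated explicitly. I expect this bookkeeping, rather than any deep structural obstacle, to be the main thing to get right.
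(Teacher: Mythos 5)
Your proposal matches the paper's own proof in both parts: part (1) is handled by the triangle equivalence $K(\Inj A)\simeq D(\Mod A)$ for finite global dimension, the invariance of generic objects under such an equivalence, and the classification from \cite[Theorem 3.11]{h13}; part (2) is handled by noting that Gorenstein algebras have Hom-finite singularity categories and then invoking Lemma \ref{l2}. Your extra remark about explicitly checking that the equivalence matches up the endofiniteness conditions on compacts is a point the paper glosses over with ``generic objects are invariant under triangulated equivalence,'' but it is the same argument.
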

\begin{proof}
  (1) If $gl.\dim A<\infty$, then $K(\Inj A)$ is equivalent to $D(\Mod A)$ as triangulated categories.
  By the definition, generic objects are invariant under triangulated equivalence. By \cite[Theorem 3.11]{h13}, $D(\Mod A)$ is generically trivial
  if and only if $A$ is derived hereditary of Dynkin type. Generical objects are invariant under an equivalence of categories.

  (2) If $A$ is a Gorenstein algebra, then $D^b_{sg}(A)$ is Hom-finite. The result follows from Lemma \ref{l2}.
\end{proof}
We summarize the above results in the following theorem.
\begin{thm}\label{thm1}The followings are equivalent:
\begin{enumerate}
  \item The categories $K(\Inj A)$ is generically trivial.
  \item The categories $D(\Mod A)$ is generically trivial.
  \item The algebra $A$ is derived equivalent to an algebra of Dynkin type.
\end{enumerate}
\end{thm}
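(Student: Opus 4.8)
The plan is to establish the three-way equivalence by combining the propositions already proved in this section into a cycle of implications, routing everything through the known characterization of when $D(\Mod A)$ is generically trivial. The logical skeleton I would use is $(3)\Rightarrow(2)\Rightarrow(1)\Rightarrow(3)$, since this arrangement lets each arrow invoke exactly one prior result. The key external input is \cite[Theorem 3.11]{h13}, which states that $D(\Mod A)$ is generically trivial if and only if $A$ is derived hereditary of Dynkin type; under the standing hypotheses this is the same as saying $A$ is derived equivalent to an algebra of Dynkin type, so the equivalence $(2)\Leftrightarrow(3)$ is essentially immediate from that theorem.

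\textbf{The implication $(1)\Rightarrow(3)$} is precisely the content of Proposition \ref{p1}: if $K(\Inj A)$ is generically trivial, then $A$ is derived discrete, and in fact the proof there shows more, namely that a generic object in $D^b(\Mod A)$ lifts via minimal injective resolution to a generic object in $K(\Inj A)$. To close the cycle at $(3)$ I would want to know that derived discreteness together with generic triviality of $K(\Inj A)$ forces the Dynkin case; here I would lean on Proposition \ref{p2}(2), whose contrapositive says that if $A$ is Gorenstein of infinite global dimension then $K(\Inj A)$ is \emph{not} generically trivial. Since every derived discrete algebra is Gorenstein \cite{gr05}, generic triviality of $K(\Inj A)$ rules out the infinite global dimension case among derived discrete algebras, leaving exactly the derived hereditary Dynkin type, which is derived equivalent to an algebra of Dynkin type.

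\textbf{For $(3)\Rightarrow(2)$ and $(2)\Rightarrow(1)$} I would argue as follows. The step $(3)\Rightarrow(2)$ is the easy direction of \cite[Theorem 3.11]{h13}. For $(2)\Rightarrow(1)$, I would split on global dimension as in Proposition \ref{p2}: if $\mathrm{gl.dim}\,A<\infty$ then $K(\Inj A)\simeq D(\Mod A)$ as triangulated categories and generic triviality transfers across the equivalence, while if $A$ is of Dynkin type it is hereditary and hence of finite global dimension, so this case is the only one that actually arises under hypothesis $(3)$. This makes the $(2)\Rightarrow(1)$ arrow clean precisely because $(2)$ (equivalently $(3)$) already pins us to the finite global dimension regime.

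\textbf{The main obstacle} I anticipate is not any single hard estimate but rather bookkeeping about the two notions of ``generic object'' and ``derived discrete versus derived endo-discrete'' that float through the section. The subtlety is that \cite{ba06} works in $D^b(\Mod A)$, which is \emph{not} compactly generated, so the generic objects there are defined differently from Definition \ref{d1}; the excerpt's remark preceding Proposition \ref{p1} already reconciles these by showing a generic object of $D^b(\Mod A)$ is genuinely generic in the compactly generated $D(\Mod A)$. I would make sure the proof cites that reconciliation explicitly rather than conflating the two definitions, and I would state clearly that the hypothesis ``$k$ algebraically closed'' guarantees infinite cardinality so that the equivalence of derived discrete and derived endo-discrete from \cite[Theorem 1.1]{ba06} is available. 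Beyond this the argument is a routine assembly, since all the analytic content lives in the lemmas and propositions established above.
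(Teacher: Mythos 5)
Your overall architecture --- $(2)\Leftrightarrow(3)$ from \cite[Theorem 3.11]{h13}, the arrow into $(1)$ via the equivalence $K(\Inj A)\simeq D(\Mod A)$ in finite global dimension, and $(1)\Rightarrow(3)$ via Proposition~\ref{p1} plus Proposition~\ref{p2} --- is the same as the paper's, and your care about reconciling the two notions of generic object and about $k$ being infinite is appropriate. But there is a genuine gap in your $(1)\Rightarrow(3)$ step. After using Proposition~\ref{p2}(2) to exclude the case $gl.\dim A=\infty$, you assert that what remains among derived discrete algebras is ``exactly the derived hereditary Dynkin type.'' That classification claim is false: a derived discrete algebra of finite global dimension need not be piecewise hereditary of Dynkin type. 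For instance, take the gentle algebra $A=kQ/I$ where $Q$ is the directed two-cycle with arrows $\alpha\colon 1\to 2$, $\beta\colon 2\to 1$ and $I=(\beta\alpha)$. This is a gentle one-cycle algebra violating the clock condition (all relations on the cycle have the same orientation), hence derived discrete but not of Dynkin type by \cite{v01}, and a direct computation gives $gl.\dim A=2$. In the Bobi\'nski--Gei\ss{}--Skowro\'nski classification these are the algebras $\Lambda(r,n,m)$ with $r<n$, all of finite global dimension. So ruling out infinite global dimension does not by itself leave only the Dynkin case, and your cycle of implications does not close as written.

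The fix is exactly what the paper does: the non-Dynkin, derived discrete case must be fed through \emph{both} parts of Proposition~\ref{p2}, not only part (2). If $A$ is derived discrete, not derived equivalent to a Dynkin-type algebra, and $gl.\dim A<\infty$, then Proposition~\ref{p2}(1) (equivalently, $K(\Inj A)\simeq D(\Mod A)$ combined with \cite[Theorem 3.11]{h13}) shows that $K(\Inj A)$ is \emph{not} generically trivial, contradicting (1); if instead $gl.\dim A=\infty$, your Gorenstein argument via Proposition~\ref{p2}(2) applies. With this two-case analysis the implication $(1)\Rightarrow(3)$ closes, and the remainder of your proof stands as written.
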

\begin{proof} $(2)\Leftrightarrow (3)$ follows from \cite[Theorem 3.11]{h13}.
  We show that $(1)\Leftrightarrow (3)$. It is trivial to show that $(3)\Rightarrow(1)$. Now, if $K(\Inj A)$ is generically trivial, then the algebra $A$ is derived discrete by Proposition \ref{p1}. Assume that $A$ is not derived equivalent to an algebra of Dynkin type, then it is derived equivalent to an gentle algebra with one cycle, which is a Gorenstein algebra. By Proposition \ref{p2}, there exist generic objects in $K(\Inj A)$. It is contradiction. Thus $A$ is derived equivalent to an algebra of Dynkin type.

\end{proof}
\section{Locally finite triangulated categories and Krull-Gabriel dimension}
In this section, we assume that triangulated categories are small.
\subsection{}
For a Hom-finite triangulated category $\mac D$, the category $\mac D$ is \emph{locally finite} if for each indecomposable object $X$ of $\mac D$, there are only finitely many isoclasses of indecomposable objects $Y$ such that $\Hom_{\mac D}(X,Y)\neq 0$ \cite{xz05}.

Let $\Mod \mac D$
 be the category of all additive functors $F:\mac D^{op}\rta \Ab$ into the category of abelian groups. A functor $F\in \Mod\mac D$ is called \emph{finitely presented} if there
 exists an exact sequence
 \[\Hom_{\mac D}(-,X)\rta \Hom_{\mac D}(-,Y)\rta F\rta 0,\]
 for $X$ and $Y$ in $\mac D$.
  The \emph{abelianisation} $Ab(\mac D)$ of a triangulated category $\mac D$ is the full subcategory of $\Mod\mac D$ consisting
 of all finitely presented objects.
 There is an characterization of locally finite triangulated category due to Auslander \cite[Theoren 2.12]{au74} or refer to \cite[Proposition 2.3]{kr12}.
 \begin{lem}
   An triangulated category $\mac D$ with split idempotents is locally finite if and only if for each object $Y$ the following holds.
   \begin{enumerate}
     \item There are only finite many isomorphism classes of indecomposable objects $X$ satisfying $\Hom_{\mac D}(X,Y)=0$.
     \item For each indecomposable object $X$, $\Hom_{\mac D}(X,Y)$ is finite length as $\End_{\mac D}X$-module.
   \end{enumerate}
 \end{lem}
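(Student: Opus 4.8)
The plan is to translate all three conditions into a single finiteness statement about representable functors and then to match the two variances by a duality argument. First I would record that $\mac D$, being $k$-linear, Hom-finite and with split idempotents, is a Krull--Schmidt category: every indecomposable $X$ has local endomorphism ring $\End_{\mac D}X$, a finite dimensional local $k$-algebra. Since $\Hom_{\mac D}(X,Y)$ is finite dimensional over $k$, it is automatically of finite length over $\End_{\mac D}X$, so condition $(2)$ holds unconditionally. The real content is therefore the equivalence between local finiteness and the finiteness of the support of each contravariant representable $h_Y=\Hom_{\mac D}(-,Y)$ recorded in condition $(1)$.

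Next I would pass to the functor category $\Mod\mac D$, where each $h_Y$ is projective and the simple objects are exactly the $S_X=h_X/\rad h_X$ indexed by the indecomposables $X$. Every composition factor of $h_Y$ is supported at a single indecomposable, and the multiplicity of $S_X$ equals the $\End_{\mac D}X$-length of the stalk $\Hom_{\mac D}(X,Y)$; hence $h_Y$ has finite length precisely when its support is finite (condition $(1)$) and each stalk has finite length (condition $(2)$). Thus ``conditions $(1)$ and $(2)$ hold for every $Y$'' is equivalent to ``every representable functor in the abelianisation $Ab(\mac D)$ has finite length'', and, since any finitely presented functor is a quotient of a representable, to ``every object of $Ab(\mac D)$ has finite length''. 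Running the identical argument in $\Mod\mac D^{op}$, whose representables are the covariant functors $\Hom_{\mac D}(X,-)$, translates local finiteness---the finiteness of the support of each $\Hom_{\mac D}(X,-)$---into the statement that every object of $Ab(\mac D^{op})$ has finite length.

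It then remains to identify these two length conditions, and this is the step I expect to be the crux, since it is genuinely about matching the covariant and contravariant sides rather than a formal manipulation. Here I would invoke the $k$-duality $D=\Hom_k(-,k)$: for a Hom-finite Krull--Schmidt $k$-category it interchanges finitely presented functors on $\mac D$ and on $\mac D^{op}$, carries the simple $S_X$ to the corresponding simple functor over $\mac D^{op}$, and preserves finite length. Consequently $Ab(\mac D)$ is a length category if and only if $Ab(\mac D^{op})$ is, which closes the loop between condition $(1)$ for all $Y$ and local finiteness. This duality is precisely the input supplied by Auslander's theorem \cite[Theorem 2.12]{au74} (see also \cite[Proposition 2.3]{kr12}); the delicate point, which I would check carefully, is that it rests on the Krull--Schmidt structure and $k$-linear duality rather than on the triangulated structure, so one must verify that $D$ does restrict to the finitely presented subcategories on both sides.
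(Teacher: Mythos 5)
Your opening reductions are fine, and for context the paper itself contains no proof of this lemma at all: it is quoted with a citation to Auslander's Theorem~2.12 and Krause's Proposition~2.3, so the comparison can only be with your argument on its own merits and with the proofs in those sources. Over a Krull--Schmidt category the simple objects of $\Mod\mac D$ are exactly the $S_X$, and ``finite length $=$ finite support $+$ finite stalk lengths'' is a correct, elementary translation; so the lemma does reduce to showing that $Ab(\mac D)$ is a length category if and only if $Ab(\mac D^{op})$ is. The genuine gap is precisely the step you flagged as delicate: the claim that $\Hom_k(-,k)$ restricts to a duality between $Ab(\mac D)$ and $Ab(\mac D^{op})$. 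That restriction property is the assertion that $\mac D$ is a \emph{dualizing} $k$-variety, and it does \emph{not} follow from Krull--Schmidt structure plus $k$-linearity; it is false for Hom-finite Krull--Schmidt triangulated categories in general. Concretely, take $A=k[x]/(x^2)$ and $\mac D=D^b(\mod A)$, let $k$ denote the simple module and $h_k=\Hom_{\mac D}(-,k)$, and consider the dual $h_k^{\vee}=\Hom_k(h_k(-),k)$. This covariant functor is homological, and a finitely presented homological functor on a triangulated category with split idempotents is a direct summand of a representable functor (Freyd); since $\End(h_k^{\vee})\cong\End_{\mac D}(k)=k$, finite presentation of $h_k^{\vee}$ would force $h_k^{\vee}\cong\Hom_{\mac D}(W,-)$ for some indecomposable $W$, that is, a Serre dual of $k$. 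No such $W$ exists: one would need $\Hom_{\mac D}(W,\Sigma^{-n}k)\neq 0$ exactly for $n\geq 0$ (because $\Hom_{\mac D}(\Sigma^{-n}k,k)=\Ext_A^{n}(k,k)$), and no indecomposable object of $D^b(\mod k[x]/(x^2))$ --- these are the shifts of $k$ and of the perfect complexes $A\xrightarrow{x}\cdots\xrightarrow{x}A$ --- has this Hom-pattern. So duality does not preserve finite presentation here, and your crux step is not a routine verification but a false statement in the generality you need.

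The deeper problem is that this missing step is where the whole content of the lemma lives. Duality carries the representable $h_Y$ to a functor that is representable (or even finitely presented) only when $Y$ admits a Serre dual, so your scheme establishes the covariant/contravariant symmetry only in the presence of Serre duality, equivalently of Auslander--Reiten triangles. In the actual proofs --- Auslander's for module categories, Xiao--Zhu's and Krause's for triangulated categories --- the existence of almost split maps or triangles (equivalently, the finite presentation of the simple functors, equivalently the duality you want) is \emph{deduced} from the finiteness hypothesis by a substantive argument; it is not available beforehand. Invoking Auslander's Theorem~2.12 as ``precisely the input'' for the duality inverts this logic: that theorem operates in the module-category, dualizing-variety setting where the duality is already known, which is exactly what must be proved in the triangulated setting. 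A smaller point: your dismissal of condition (2) as automatic silently assumes Hom-finiteness over $k$; the lemma as stated assumes only split idempotents, and in Krause's formulation condition (2) carries real content, so it should not be discarded without comment.
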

 There is a list of locally finite triangulated categories in \cite{kr12},
 and we show an example in this list.
 \begin{exm}
   Let $A$ be a finite dimensional self-injective $k$-algebra. Then $\umod A$ is locally finite if and only if  $A$ is of finite representation type. At the same time, $\umod A$ is locally finite if and only if $\uMod A$ is generically trivial.
 \end{exm}
.
 \subsection{}

 Let $\mac A$ be an abelian category, a subcategory $\mac A'$ of $\mac A$ is called \emph{Serre subcategory} if it is closed under subobjects, quotients and extensions.
 Then one can define the quotient subcategory $\mac A/\mac A'$ as follows. The objects in $\mac A/\mac A'$ coincides with the the objects in $\mac A$. The morphisms are
 defined by
 \[\Hom_{\mac A/\mac A'}(X,Y)=\underrightarrow{lim}_{\substack{X'\subseteq X, Y'\subseteq Y, \\ X/X',Y'\in\mac A}}\Hom_{\mac A}(X',Y/Y').\]
 There is a natural quotient functor $q:\mac A\rta\mac A/\mac A'$.
 Set $\mac A_{-1}=0$, and for each $n\in\mabb Z_{>0}$, $\mac A_n$ is the subcategory of objects $X$ in $\mac A$ which are finite length in $\mac A/\mac A_{n-1}$
  under the quotient functor $q$.
 The \emph{Krull-Gabriel dimension} $\KG\dim \mac A$ of $\mac A$ is the smallest integer $n$ such that $\mac A=\mac A_n$.

Krull-Gabriel dimension measures how far $Ab(\mac D)$ is from being a length category. One has that $\KG\dim Ab(\mac D)=0$ iff $\mac D$ is locally finite. For example, $\mac A=Ab(\mod A)$ for some finite dimensional
$k$-algebra, then $\KG\dim\mac A=0$ iff $A$ is of finite representation type \cite{au74}. In the cases of triangulated categories, we have the similar result.
For $\mac T=D(\Mod A)$ or $\mac T=K(\Inj A)$, the category
$\mac T^c$ is locally finite if and only if $\KG\dim Ab (\mac T^c)=0$ \cite[Main theorem]{bk14}.
  In general, a category $\mac D$ is locally finite if and only if $Ab(\mac D)$ is a length category.
\begin{lem}\label{l3}
Let $\mac D$ be  a small triangulated category  and idempotent split. The category $\mac D$ is locally finite if and only if
$\KG\dim (Ab(\mac D))=0$.
\end{lem}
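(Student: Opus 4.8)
The plan is to route both implications through the single assertion that the representable functor $P_Y=\Hom_{\mac D}(-,Y)$ has finite length in $Ab(\mac D)$ for every object $Y$. Since every finitely presented functor $F$ sits in an exact sequence $\Hom_{\mac D}(-,X)\to\Hom_{\mac D}(-,Y)\to F\to 0$, it is a quotient of the representable $P_Y$; hence $Ab(\mac D)$ is a length category (equivalently $\KG\dim Ab(\mac D)=0$, since $\mac A_{-1}=0$ forces $\mac A_0$ to be the finite-length objects of $\mac A$ itself) if and only if each $P_Y$ has finite length. It therefore suffices to show that all $P_Y$ have finite length precisely when the two conditions of the Auslander criterion recalled above hold for every $Y$; that criterion then identifies this situation with local finiteness of $\mac D$.

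The key input is the standard structure theory of the abelianisation: $Ab(\mac D)$ is abelian, the representable functors are projective, each evaluation functor $F\mapsto F(X)$ is exact, and the simple objects are exactly the functors $S_X=\Hom_{\mac D}(-,X)/\rad_{\mac D}(-,X)$ indexed by the indecomposables $X$ up to isomorphism. For an indecomposable $X$ every nonradical endomorphism is invertible, so $S_X(X)\cong\End_{\mac D}(X)/\rad\End_{\mac D}(X)=:D_X$ is a division algebra, while $S_Z(X)=0$ for indecomposable $Z\not\cong X$. I would first recall these facts (Freyd, Auslander) and fix the Hom-finiteness that is implicit in the very notion of local finiteness, so that each $D_X$ is finite dimensional over $k$.

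With this in hand the computation of composition factors is immediate and is the crux of the argument. Applying the exact evaluation functor at an indecomposable $X$ to a composition series of any finite-length subquotient of $P_Y$, only the factors isomorphic to $S_X$ survive, yielding $[\,P_Y:S_X\,]\cdot\dim_k D_X=\dim_k P_Y(X)=\dim_k\Hom_{\mac D}(X,Y)$. Thus $S_X$ occurs in $P_Y$ if and only if $\Hom_{\mac D}(X,Y)\neq 0$, with finite multiplicity whenever this Hom-space is finite dimensional. Moreover a proper inclusion $M\subsetneq N\subseteq P_Y$ is already proper after evaluation at some indecomposable $X$, which then necessarily satisfies $\Hom_{\mac D}(X,Y)\neq 0$ since $N(X)\subseteq P_Y(X)=\Hom_{\mac D}(X,Y)$ is nonzero; as evaluation on indecomposables is faithful, every chain of subobjects of $P_Y$ has length bounded by $\sum_{X}\dim_k\Hom_{\mac D}(X,Y)$, the sum running over indecomposables with $\Hom_{\mac D}(X,Y)\neq 0$.

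Both directions now drop out. If $\mac D$ is locally finite, conditions (1) and (2) of the criterion make the displayed sum finite, so $P_Y$ is Noetherian and Artinian of finite length, giving $\KG\dim Ab(\mac D)=0$. Conversely, if $Ab(\mac D)$ is a length category then each $P_Y$ has only finitely many composition factors, whence only finitely many indecomposables $X$ have $\Hom_{\mac D}(X,Y)\neq 0$ (condition (1)), while Hom-finiteness supplies condition (2); the Auslander criterion then returns local finiteness. The step I expect to be the real obstacle is the bookkeeping behind the multiplicity identity—rigorously detecting the composition factors of the a priori possibly infinite-length object $P_Y$ by evaluation, and arguing that the resulting bound on chains of subobjects forces finite length—rather than any isolated hard estimate.
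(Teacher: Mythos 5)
Your proof is correct, granted the standing Hom-finiteness hypothesis that you rightly flag at the outset: it is implicit in the paper's definition of locally finite and is genuinely needed, since $\KG\dim Ab(\mac D)=0$ alone does not force $\Hom$-spaces to be finite dimensional over $k$. But your route differs from the paper's, chiefly in the converse direction. In the forward direction both arguments reduce to showing that the representables $\Hom_{\mac D}(-,Y)$ have finite length and that every finitely presented functor is a quotient of one; the paper outsources the finite-length verification to Auslander [au82, Corollary 3.11], whereas you prove it by hand via the chain-length bound $\sum_X \dim_k\Hom_{\mac D}(X,Y)$. The real divergence is the converse: from $\KG\dim Ab(\mac D)=0$ the paper passes to the covariant functors $\Hom_{\mac D}(X,-)$ using the self-duality $Ab(\mac D^{op})\simeq Ab(\mac D)^{op}$ of the abelianisation of a triangulated category, and then reads off local finiteness directly from its definition (a finite length functor is supported on only finitely many indecomposables). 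You instead stay entirely on the contravariant side, extract conditions (1) and (2) of the recalled Auslander criterion from the multiplicity identity $[\,P_Y:S_X\,]\cdot\dim_k D_X=\dim_k\Hom_{\mac D}(X,Y)$, and let that criterion perform the covariant--contravariant translation. The nontrivial input is thus located differently: in the paper it is the op-duality of the Freyd abelianisation, special to triangulated categories; in yours it is the Auslander criterion, which the paper states as a lemma, so invoking it is legitimate. Your version is more self-contained and makes explicit the Krull--Schmidt property and the classification of simple functors $S_X$, both of which the paper's support argument also uses silently; the paper's version is shorter. A minor point in your favour: your argument uses (and thereby silently corrects) the evident typo in the recalled criterion, whose condition (1) should read $\Hom_{\mac D}(X,Y)\neq 0$ rather than $\Hom_{\mac D}(X,Y)=0$.
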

\begin{proof}
If an essential small triangulated category $\mac D$ is locally finite, then each representable functor $\Hom_{\mac D}(-,X)$ is
 of finite length. For each $F\in Ab (\mac D)$, there are only finitely many $X\in \mac D$ such that $F(X)\neq 0$. The functor is finite length by \cite[Corollary 3.11]{au82}.
That means $\KG\dim Ab(\mac D)=0$.

Conversely, if $\KG\dim Ab(\mac D)=0$, then each functor $\Hom(-,X)$ is finite length. By the equivalence between $Ab(\mac D^{op})$ and $Ab(\mac D)^{op}$ induced
by $\Hom_{\mac D}(-,X)$ to $\Hom_{\mac D}(X,-)$. Thus $\Hom(-,X)$ and $\Hom(X,-)$ both are finite length functor for any $X\in D$.
 It follows that $\mac D$ is a locally finite category.
\end{proof}

 The abelianisation $\Ab\mac T^c$
is called \emph{noetherian} if it satisfies the ascending chain condition on subobjects. This is equivalent to  the category
$\Mod \mac T^c$ has a set of generators consisting of
noetherian objects.

 We say a compactly generated triangulated category $\mac T$ is \emph{associated with an algebra} $A$ if
  it is of the form either $K(\Inj A)$ or $D(\Mod A)$ or $\uMod A$ with $A$ a self-injective algebra.
  One has the following result about generically trivial triangulated categories.
\begin{thm}\label{t1}
  Let $\mac T$ be a compactly generated triangulated category associated with an algebra $A$, set $\mac A=Ab (\mac T^c)$ then the following are equivalent.
  \begin{enumerate}
    \item $\mac T$ is generically trivial.
    \item $\mac T^c$ is locally finite.
    \item $\KG\dim \mac A=0$.
    \item $\mac A$ is noetherian.
  \end{enumerate}
\end{thm}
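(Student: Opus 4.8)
The plan is to establish the equivalence by first linking $(2)$, $(3)$, $(4)$ through general functor-category arguments, and then attaching $(1)$ case by case according to the three admissible forms of $\mac T$. Since $\mac T$ is compactly generated it has (countable) coproducts, so idempotents split in $\mac T$ and therefore in the essentially small full subcategory $\mac T^c$ of compact objects. Thus $\mac D=\mac T^c$ is a small, idempotent-split triangulated category, and Lemma \ref{l3} applies directly to give $(2)\Leftrightarrow(3)$: local finiteness of $\mac T^c$ is exactly $\KG\dim\mac A=0$.

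Next I would treat $(3)\Leftrightarrow(4)$. The implication $(3)\Rightarrow(4)$ is immediate: if $\KG\dim\mac A=0$ then $\mac A=\mac A_0$ is a length category, so every object satisfies the ascending chain condition and $\mac A$ is noetherian. For $(4)\Rightarrow(3)$ I would use the duality $Ab(\mac T^{c,op})\cong\mac A^{op}$ induced by $\Hom_{\mac T^c}(-,X)\mapsto\Hom_{\mac T^c}(X,-)$, which already appears in the proof of Lemma \ref{l3} and interchanges subobjects with quotient objects, hence noetherian with artinian objects. Condition $(4)$ says precisely that $\Mod\mac T^c$ is locally noetherian, i.e.\ that $\mac T$ is of finite type in the sense of \cite{bel00}; combining Beligiannis' finite-type description with the duality forces every finitely presented functor to be simultaneously noetherian and artinian, hence of finite length, so that $\mac A=\mac A_0$ and $(3)$ holds.

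It remains to tie $(1)$ to the common condition $(2)$, and here I would split into the three cases defining ``associated with an algebra''. If $\mac T=\uMod A$ with $A$ self-injective, then $\mac T^c=\umod A$; by Proposition \ref{p3} condition $(1)$ holds iff $A$ has finite representation type, while the Example of this section records that $\umod A$ is locally finite iff $A$ has finite representation type, whence $(1)\Leftrightarrow(2)$. If $\mac T=D(\Mod A)$ or $\mac T=K(\Inj A)$, then $\mac T^c$ is $K^b(\proj A)$, respectively $\db A$, and by Theorem \ref{thm1} condition $(1)$ holds iff $A$ is derived equivalent to an algebra of Dynkin type. To match this with $(2)$ I would invoke the classification of locally finite triangulated categories \cite{xz05}: such a category has Auslander--Reiten triangles and its Auslander--Reiten quiver is of the form $\mabb Z\Delta/G$ with $\Delta$ a Dynkin diagram, which applied to $\mac T^c$ forces $A$ to be piecewise hereditary of Dynkin type; the converse is clear, since the derived category of a Dynkin algebra has only finitely many indecomposables up to shift and is therefore locally finite.

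The step I expect to be the main obstacle is this last identification in the derived and homotopy cases, namely that local finiteness of $\mac T^c$ is equivalent to the full strength of ``derived equivalent to Dynkin'' and not merely to derived discreteness. The delicate point is to exclude the one-cycle gentle algebras: these are derived discrete but not Dynkin, they are Gorenstein, and by Lemma \ref{l2} and Proposition \ref{p2} they already produce generic objects in $K(\Inj A)$; I would have to show that the resulting $\mabb Z\Delta/G$-obstruction in the Auslander--Reiten quiver indeed breaks local finiteness of $\mac T^c$, so that $(1)$ and $(2)$ fail together. A secondary subtlety is the precise $(4)\Rightarrow(3)$ argument, where one must ensure that the noetherian hypothesis on $\mac A$ alone (rather than on $\mac A$ together with its dual) suffices to force finite length; this is exactly where the finite-type results of \cite{bel00} are essential.
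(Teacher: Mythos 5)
Your proposal has the same skeleton as the paper's proof: split into the three admissible forms of $\mac T$, handle $\uMod A$ via Proposition \ref{p3} (together with the Example identifying local finiteness of $\umod A$ with finite representation type), get $(2)\Leftrightarrow(3)$ from Lemma \ref{l3}, bring in $(1)$ for $K(\Inj A)$ and $D(\Mod A)$ via Theorem \ref{thm1}, and appeal to \cite{bel00} for condition $(4)$. Two points differ, and both are worth recording. First, the paper attaches $(4)$ through the equivalence $(1)\Leftrightarrow(4)$, citing Theorem \ref{thm1} together with \cite[Theorem 12.20]{bel00}; you instead prove $(3)\Leftrightarrow(4)$, with $(3)\Rightarrow(4)$ from the length-category observation and $(4)\Rightarrow(3)$ from the duality plus Beligiannis' finite-type theory. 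Both routes rest on the same external input; yours carries the extra subtlety (which you correctly flag) that ``noetherian'' for $\mac A$ is defined via subobjects in $\Mod\mac T^c$, so finite length measured in $\mac A$ and noetherianness measured in $\Mod\mac T^c$ must be reconciled --- the paper sidesteps this by citing Beligiannis for the whole equivalence rather than factoring it through $(3)$. Second, and in your favor: the paper's assertion that $(1)\Leftrightarrow(2)$ ``follows from Theorem \ref{thm1}'' is incomplete as written, since Theorem \ref{thm1} identifies $(1)$ with ``$A$ is derived equivalent to an algebra of Dynkin type'' and says nothing at all about local finiteness of $\mac T^c$. You isolate exactly the missing link --- that $\db A$ (resp. $K^b(\proj A)$) is locally finite precisely when $A$ is derived Dynkin --- and propose to close it with the Xiao--Zhu classification \cite{xz05}, in particular using the $\mabb Z\Delta/G$ shape of the AR quiver to exclude the derived discrete non-Dynkin (gentle one-cycle) algebras. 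This is the step the paper uses silently; making it explicit (or replacing it by a citation to the list of locally finite triangulated categories in \cite{kr12}) is what a complete proof of $(1)\Leftrightarrow(2)$ actually requires, so on this point your proposal is more honest than the paper's own argument.
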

\begin{proof}If $\mac T=\uMod A$ for a self-injective algebra $A$, then $\mac T$ is generically trivial iff $A$ is of finite type Proposition \ref{p3}. Thus all the
statements are equivalent.

Now we consider $\mac T$ is of the form $K(\Inj A)$ or $D(\Mod A)$ for an algebra $A$.
 The equivalence $(2)\Leftrightarrow (3)$ follows from  Lemma \ref{l3}.
 The equivalence $(1)\Leftrightarrow (2)$ follows from Theorem \ref{thm1}.
The equivalence $(1)\Leftrightarrow (4)$ follows from Theorem \ref{thm1} and \cite[Theorem 12.20]{bel00}.
\end{proof}
For a compactly generated triangulated category $\mac T$, we say that $\mac T$ is of \emph{finite type}
if $\Mod \mac T^c$ has a set of generators of finite length \cite[Definition 11.19]{bel00}. Combine \cite[Proposition 11.23]{bel00} and
Theorem \ref{t1}, we have the following result.
\begin{cor}Let $\mac T$ be a compactly generated triangulated category associated with an algebra $A$, set $\mac A=Ab (\mac T^c)$ then the following are equivalent.
  \begin{enumerate}
    \item $\mac T$ is of finite type.
    \item $\KG\dim Ab(\mac T)=0.$
    \item $\mac T$ is generically trivial.
    \item $T^c$ is locally finite.
  \end{enumerate}
\end{cor}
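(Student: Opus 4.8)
The plan is to obtain the corollary as a bookkeeping exercise built on Theorem \ref{t1} and \cite[Proposition 11.23]{bel00}, so that no genuinely new argument is required. First I would observe that three of the four conditions listed here already occur, essentially verbatim, among the equivalent conditions of Theorem \ref{t1}. Reading condition (2) through the hypothesis $\mac A = Ab(\mac T^c)$ (i.e. interpreting $Ab(\mac T)$ via the standard identification of the abelianisation of a compactly generated category with that of its compact part), one has: condition (3) here is Theorem \ref{t1}(1), condition (4) here is Theorem \ref{t1}(2), and condition (2) here is Theorem \ref{t1}(3). Consequently the equivalences $(2)\Leftrightarrow(3)\Leftrightarrow(4)$ are immediate from Theorem \ref{t1}, and the whole task reduces to splicing condition (1) into this chain.

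For the finite-type condition I would invoke \cite[Proposition 11.23]{bel00}, which characterizes finite type of a compactly generated category directly in terms of its functor category $\Mod\mac T^c$, and route the equivalence through the noetherian condition $\mac A$ noetherian, which is Theorem \ref{t1}(4) and hence already equivalent to all of $(2)$–$(4)$. On the one side, $\mac T$ of finite type means by definition that $\Mod\mac T^c$ has a set of generators of finite length; since every finite-length object is noetherian, the remark preceding Theorem \ref{t1} gives at once that $\mac A$ is noetherian, i.e. the implication $(1)\Rightarrow\text{(Theorem \ref{t1}(4))}$ is formal. For the converse I would use \cite[Proposition 11.23]{bel00} to upgrade a set of noetherian generators to one of finite-length generators once $\KG\dim\mac A = 0$ holds, so that finite type follows from any—hence all—of the equivalent conditions. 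Threading this two-sided equivalence through Theorem \ref{t1} then yields $(1)\Leftrightarrow(2)\Leftrightarrow(3)\Leftrightarrow(4)$.

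I expect the only delicate point to be matching the hypotheses and exact statement of \cite[Proposition 11.23]{bel00} to the present setting, together with confirming that condition (2), written with $Ab(\mac T)$ rather than $Ab(\mac T^c)$, is indeed meant through that standard identification; once these conventions are pinned down, the argument is essentially a direct citation. It is also worth recording explicitly that the case $\mac T = \uMod A$ with $A$ self-injective is already subsumed, since Theorem \ref{t1} disposes of it through Proposition \ref{p3}, and \cite[Proposition 11.23]{bel00} applies uniformly to all three families of categories associated with an algebra, so no separate treatment of the self-injective case is needed.
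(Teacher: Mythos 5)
Your proposal is correct and matches the paper's own argument, which consists precisely of the one-line instruction to combine \cite[Proposition 11.23]{bel00} with Theorem \ref{t1}; your routing of the finite-type condition through the noetherian condition of Theorem \ref{t1}, and your reading of $Ab(\mac T)$ in condition (2) as $Ab(\mac T^c)$, simply make explicit the bookkeeping the paper leaves implicit.
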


\bibliographystyle{plain}

\bibliography{bib/thesis}

\begin{thebibliography}{10}

\bibitem{am071}
C.~Amiot.
\newblock On the structure of triangulated categories with finitely many
  indecomposables.
\newblock {\em Bull. Soc. Math. France}, 135(3):435--474, 2007.

\bibitem{au74}
M.~Auslander.
\newblock Representation theory of {A}rtin algebras.{II}.
\newblock {\em Comm. Algebra}, 1:(1974), 269--310, 1974.

\bibitem{au82}
M.~Auslander.
\newblock A functorial approach to representation theory.
\newblock In {\em Representations of algebras ({P}uebla, 1980)}, volume 944 of
  {\em Lecture Notes in Math.}, pages 105--179. Springer, Berlin, 1982.

\bibitem{ba06}
R.~Bautista.
\newblock Derived endo-discrete {A}rtin algebras.
\newblock {\em Colloq. Math.}, 105(2):297--310, 2006.

\bibitem{bel00}
A.~Beligiannis.
\newblock Relative homological algebra and purity in triangulated categories.
\newblock {\em J. Algebra}, 227(1):268--361, 2000.

\bibitem{bk00}
D.~J. Benson and H.~Krause.
\newblock Generic idempotent modules for a finite group.
\newblock {\em Algebr. Represent. Theory}, 3(4):337--346, 2000.

\bibitem{bk14}
G.~Bobi{\'n}ski and H.~Krause.
\newblock The krull-gabriel dimension of discrete derived categories.
\newblock {\em arXiv:1402.1596v1}.

\bibitem{cxw11}
X.~Chen.
\newblock The singularity category of an algebra with radical square zero.
\newblock {\em Documenta Mathematica}, 16:921--936, 2011.

\bibitem{cb92}
W.~Crawley-Boevey.
\newblock Modules of finite length over their endomorphism rings.
\newblock In {\em Representations of algebras and related topics ({K}yoto,
  1990)}, volume 168 of {\em London Math. Soc. Lecture Note Ser.}, pages
  127--184. Cambridge Univ. Press, Cambridge, 1992.

\bibitem{gk02}
C.~Geiss and H.~Krause.
\newblock On the notion of derived tameness.
\newblock {\em J. Algebra Appl.}, 1(2):133--157, 2002.

\bibitem{gr05}
C.~Gei{\ss} and I.~Reiten.
\newblock Gentle algebras are {G}orenstein.
\newblock In {\em Representations of algebras and related topics}, volume~45 of
  {\em Fields Inst. Commun.}, pages 129--133. Amer. Math. Soc., Providence, RI,
  2005.

\bibitem{h13}
Z.~Han.
\newblock Generically trivial derived categories.
\newblock {\em arXiv:1308.2336v1}.

\bibitem{kra99}
H.~Krause.
\newblock Decomposing thick subcategories of the stable module category.
\newblock {\em Math. Ann.}, 313(1):95--108, 1999.

\bibitem{kr05}
H.~Krause.
\newblock The stable derived category of a {N}oetherian scheme.
\newblock {\em Compos. Math.}, 141(5):1128--1162, 2005.

\bibitem{kr12}
H.~Krause.
\newblock Report on locally finite triangulated categories.
\newblock {\em J. of K-theory: K-theory and its Applications to Algebra,
  Geometry, and Topology}, 9:421--458, 2012.

\bibitem{nee01}
A.~Neeman.
\newblock {\em Triangulated categories}, volume 148 of {\em Annals of
  Mathematics Studies}.
\newblock Princeton University Press, Princeton, NJ, 2001.

\bibitem{v01}
D.~Vossieck.
\newblock The algebras with discrete derived category.
\newblock {\em J. Algebra}, 243(1):168--176, 2001.

\bibitem{xz05}
J.~Xiao and B.~Zhu.
\newblock Locally finite triangulated categories.
\newblock {\em J. Algebra}, 290(2):473--490, 2005.

\end{thebibliography}
\end{document}